\newtheorem{theorem}{Theorem}[section]
\newtheorem{lemma}[theorem]{Lemma}
\theoremstyle{definition}
\newtheorem{definition}[theorem]{Definition}
\newtheorem{example}[theorem]{Example}
\theoremstyle{remark}
\numberwithin{equation}{section}
\begin{document}

\title{Best proximity point of generalized
$\theta-\phi-$proximal non-self contractions}

\author[Mohamed Rossafi and Abdelkarim Kari]{Mohamed Rossafi$^{1*}$ and Abdelkarim Kari$^{2}$}

\address{$^{1}$Department of Mathematics, Faculty of Sciences Dhar El Mahraz, University Sidi Mohamed Ben Abdellah, B. P. 1796 Fes Atlas, Morocco}
\email{\textcolor[rgb]{0.00,0.00,0.84}{rossafimohamed@gmail.com}}

\address{$^{2}$Department of Mathematics, Faculty of Sciences Ben M’Sik, Hassan II University, B. P. 7955 Casablanca, Morocco}
\email{\textcolor[rgb]{0.00,0.00,0.84}{abdkrimkariprofes@gmail.com}}

\subjclass[2010]{Primary 47H10; Secondary 54H25.}
\keywords{ $P$-property, best proximity  point, generalized $\theta-\phi$-proximal contraction.}
\begin{abstract}
In this manuscript, motivated and inspired by results of Best proximity point of generalized $ F $-proximal non-self contractions, we introduce the concept of generalized $\theta-\phi-$proximal contraction  and prove new best proximity results for these contractions in the setting of a metric
space. Our results generalize and extend many recent results appearing in the literature. An example is being given to demonstrate the usefulness of our results. 
\end{abstract}
\maketitle
\section{Introduction}
It is well known that the Banach contraction theorem is the first outstanding result in the field of the fixed point theory that ensure the existence of unique fixed point in complete metric spaces. Due to its importance, various mathematics steadied many interesting extensions and generalizations \cite{KARI,KARO,RK,ZH}. One of the famous generalizations of the Banach contraction principle \cite{S} for existence of fixed point for self-mapping on metric space  is the theorem by Zheng et al. \cite{ZH} and the contraction introduced by Jleli and  Samet in \cite{JK}.

Best proximity point theorem analyses the condition under which the optimisation problem, namely $ \inf_{x \in A } d(x,Tx)$, has a solution. The point $ x $ is called the best proximity of $ T: A\rightarrow B$, if $ d(x, Tx)=d(A, B)$, where $ \lbrace d(A, B)=\inf d(x,y): x \in A, y \in B \rbrace$. Note that the best proximity point reduces to a fixed point if $ T $ is a self-mapping. Various best proximity point results were established on such spaces \cite{HA,HAY,RK}. 

Sankar Raj \cite{RJ} and Zhang et al. \cite{ZHN} defined the notion of $P-$property and weak $P-$property respectively. Beg et al. \cite{BG} defined the concept of generalized $F$-proximal non-self contractions and obtained some best proximity point theorems for self-mappings.
    
In this paper, inspired by the idea of generalized $F$-proximal non-self contractions, introduced by Beg et al. \cite{BG} in metric spaces, we prove a new existence of best proximity point for generalized $\theta-\phi-$proximal contraction defined on a closed subset of a complete metric space. Our theorems extend, generalize and improve many existing results.
\section{Preliminaries}
Let $(A, B)$ be a pair of non empty subsets of a metric space $(X,d)$. We adopt the following notations:

$d(A,B)=\lbrace\inf d\left(a,b\right):a\in A, b\in B\rbrace$;

$A_0=\lbrace$ $ a\in A $ there exists $b\in A$ such that  $d\left(a,b\right)= d\left(A,B\right)\rbrace$;

$B_0=\lbrace$ $ b\in B $ there exists $a\in A$ such that  $d\left(a,b\right)= d\left(A,B\right)\rbrace$. 
\begin{definition}
\cite{KI} Let $T: A\rightarrow B$ be a mapping. An element $x^{\ast}$ is said to be a best proximity point of $T$ if \begin{equation*}
 d\left(x^{\ast },Tx^{\ast }\right)=d\left(A,B\right).     
\end{equation*}
\end{definition}
\begin{definition}
\cite{RJ} Let $(A,B)$ be a pair of non empty subsets of a metric space $ (X,d) $ such that $ A_0 $ is non empty. Then the pair $(A,B)$ is to have $P$-property if and only if
\begin{equation*}
	\left\lbrace
	\begin{aligned}		    
	d\left(x_1,y_1\right)&=d\left(A,B\right)\\
	d\left(x_2,y_2\right)&=d\left(A,B\right)
	\end{aligned}
	\right.\Rightarrow d(x_1,x_2)=d(y_1,y_2)
	\end{equation*}
 where $x_1, x_2 \in  A_{0}$  and  $y_1, y_2\in  B_{0}$.
\end{definition}
\begin{definition}
\cite{BS} A set $B  $ is called approximately compact with respect to
$ A $ if every sequence $ \lbrace x_{n} \rbrace $ of $ B $ with $  d(y,  x_{n}) \rightarrow d(y,B)$ for some $  y \in A$ has a
convergent subsequence.
\end{definition}
\begin{definition}
\cite{JK} Let  $ \Theta $ be the family of all functions  $\theta$ : $\left]   0,+\infty \right[ $
  $\rightarrow \left] 1 ,+\infty \right[ $ such that
\begin{itemize}
\item[$(\theta_1)$] $\theta$ is strictly increasing;
\item[$(\theta_2)$] For each sequence $x_n\in \left] 0,+\infty \right[$;
\begin{equation*}
\lim_{n\rightarrow 0}x_{n}=0,\,\,\,\text{ if and only if}\,\,\,\lim_{n\rightarrow \infty }\theta\left( x_{n}\right) =1;
\end{equation*}
\item[$(\theta_3)$]  $\theta$ is continuous.
\end{itemize} 
\end{definition}
\begin{definition}
\cite{ZH}  Let   $ \Phi $ be the family of all functions $\phi$:  $\left[ 1,+\infty \right[ $
  $\rightarrow \left[ 1,+\infty \right[ $, such that
 \begin{itemize}
 \item[$(\phi_1)$] $\phi$ is increasing;
\item[$(\phi_2)$] For each  $t\in \left] 1,+\infty \right[ $, $  lim_{n\rightarrow \infty }\phi^{n}( t) =1$;
\item[$(\phi_3)$] $\phi$ is continuous.
\end{itemize}  
\end{definition}
\begin{lemma} \cite{ZH} \label{LZ} If   $\phi $ $\in \Phi$   Then   $ \phi( 1) $=1,
and  $ \phi( t) < t $.
\end{lemma}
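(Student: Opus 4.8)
The plan is to exploit the monotonicity $(\phi_1)$ together with the iterate condition $(\phi_2)$; the continuity $(\phi_3)$ will in fact not be needed. The guiding observation is that if $\phi$ ever satisfies $\phi(a)\ge a$ at some point $a\ge 1$, then the orbit $\{\phi^n(a)\}$ can never decrease, so it stays bounded away from $1$, which collides head‑on with $(\phi_2)$.

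First I would prove the inequality $\phi(t)<t$ for every $t>1$ by contradiction. Suppose $\phi(t_0)\ge t_0$ for some $t_0>1$. Since $\phi$ is increasing, applying $\phi$ to this inequality and iterating gives, by a short induction, $\phi^{n+1}(t_0)\ge \phi^{n}(t_0)$ for all $n$; hence $\{\phi^{n}(t_0)\}$ is non-decreasing and $\phi^{n}(t_0)\ge \phi(t_0)\ge t_0>1$ for every $n\ge 1$. Passing to the limit yields $\lim_{n\to\infty}\phi^{n}(t_0)\ge t_0>1$, which contradicts $(\phi_2)$. Therefore $\phi(t)<t$ for all $t>1$.

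Next I would deduce $\phi(1)=1$. Since the codomain of $\phi$ is $[1,+\infty[$, we already have $\phi(1)\ge 1$, so it only remains to rule out $\phi(1)>1$. If $s:=\phi(1)>1$, then monotonicity gives $\phi(s)\ge \phi(1)=s$ (because $s\ge 1$), which contradicts the strict inequality $\phi(s)<s$ established in the previous step for the point $s>1$. Hence $\phi(1)=1$.

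The argument is short, and the only real care needed is the induction verifying that an orbit launched from a point with $\phi(a)\ge a$ cannot decrease; this is precisely where $(\phi_1)$ enters and where the clash with $(\phi_2)$ is forced, so I expect no genuine obstacle. The one subtlety worth flagging is that $(\phi_2)$ is assumed only on $]1,+\infty[$, so every invocation of it above must be made at a point strictly larger than $1$ (namely $t_0$ and $s$), which the setup guarantees.
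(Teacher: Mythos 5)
Your proof is correct. Note that the paper itself gives no proof of this lemma --- it is quoted from the cited reference of Zheng et al.\ \cite{ZH} --- so there is nothing internal to compare against; your argument is the standard one (and essentially the one in that reference): monotonicity $(\phi_1)$ forces any point with $\phi(t_0)\ge t_0$, $t_0>1$, to generate a non-decreasing orbit bounded below by $t_0$, contradicting $(\phi_2)$, and the case $\phi(1)>1$ is then reduced to this via $\phi(1)\le\phi(\phi(1))$. You are also right that $(\phi_3)$ is never needed and that $(\phi_2)$ must only ever be invoked at points strictly greater than $1$, which your two invocations respect.
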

\begin{definition}
\cite{ZH}. Let $(X,d)$ be a metric space and $T:X\rightarrow X$ be a mapping.
 
$T$ is said to be a $\theta-\phi-$contraction if there
exist $\theta \in \Theta $ and $\phi \in \Phi $ such that for any
$x,y\in X,$ 
\begin{equation*}
d\left( Tx,Ty\right) >0\Rightarrow \theta \left[ d\left( Tx,Ty\right) \right]
\leq \phi \left [\theta \left(d\left( x,y\right) \right )\right], 
\end{equation*}
\end{definition}
\section{Main result}
In this section, inspired by the notion of $ F $-proximal contraction of the first kind and second kind, we introduce new generalized $\theta-\phi$-proximal first kind and second kind on complete metric space.
\begin{definition}
The mapping 
$ T: A\rightarrow B$ is said to be a generalized $\theta-\phi$-proximal contraction of first kind if there exist  $\theta \in \Theta $,  $\phi \in\Phi $  and $ a,b,c,h\geq 0 $ with $ a+b++2ch $, $ c\neq 1 $ such that
\begin{equation*}
	\left\lbrace
	\begin{aligned}		    
	d\left(u_1,Tv_1\right)&=d\left(A,B\right)\\
	d\left(u_2,Tv_2\right)&=d\left(A,B\right)
	\end{aligned}
	\right.\Rightarrow \theta(d(u_1,u_2))\\
	\leq \phi\left[ \theta\left[  a d\left(v_1,v_2\right)+bd\left(u_1,v_1\right)+cd\left(u_2,v_2\right)+h\left( d\left(v_1,u_2\right)+d\left(v_2,u_1\right)\right)\right]  \right] 
	\end{equation*} 
for all $  u_1, u_2,v_1, v_2\in A$ and $ u_1\neq v_1 $.
\end{definition}
\begin{definition}
The mapping 
$ T:A\rightarrow B$ is said to be a generalized $\theta-\phi $-proximal contraction of second kind if there exist  $\theta \in \Theta $,  $\phi \in\Phi $  and $ a,b,c,h\geq  0 $ with $ a+b++2ch $, $ c\neq 1 $ such that
\begin{equation*}
	\left\lbrace
	\begin{aligned}		    
	d\left(u_1,Tv_1\right)&=d\left(A,B\right)\\
	d\left(u_2,Tv_2\right)&=d\left(A,B\right)
	\end{aligned}
	\right.\Rightarrow \theta(d(Tu_1,Tu_2))\\	
	\end{equation*} 
	\begin{equation*}
	\leq \phi\left[ \theta\left[  a d\left(Tv_1,Tv_2\right)+bd\left(Tu_1,Tv_1\right)+cd\left(Tu_2,Tv_2\right)+h\left( d\left(Tv_1,Tu_2\right)+d\left(Tv_2,Tu_1\right)\right)\right]  \right] 
	\end{equation*}	
for all $  u_1, u_2,v_1, v_2\in A$ and $ Tu_1\neq Tv_1 $.
\end{definition}
\begin{theorem}\label{T1}
Let $ (X,d) $ be a complete metric space and $ (A,B) $ be a pair of non-void closed subsets of $ (X,d) $. If $ B $ is approximately compact with respect to $ A $ and $ T: A\rightarrow B $ satisfy the following conditions :
\begin{itemize}
\item[(i)] $ T\left( A_0\right) \in B_0$ and the pair $\left( A,B\right)$ satisfies the weak $ P $-property;
\item[(ii)] $ T $ is  a generalized $\theta-\phi$-proximal contraction of first kind.
\end{itemize}
Then there exists a unique $ u\in A $ such that $ d(u, Tu)=d(A,B) $. In addition, for any fixed element $ u_{0 }\in A_{0}$, sequence $ \lbrace u_n\rbrace $ defined by 
$$ d( u_{n+1 },T u_{n })= d(A,B),$$
converges to the proximity point.
\end{theorem}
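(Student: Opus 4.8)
The plan is to construct the iterative sequence $\{u_n\}$ guaranteed by the hypotheses, show it is Cauchy, and identify its limit as the desired best proximity point. First I would fix $u_0 \in A_0$. Since $Tu_0 \in T(A_0) \subseteq B_0$, there exists $u_1 \in A_0$ with $d(u_1, Tu_0) = d(A,B)$; iterating, I obtain a sequence $\{u_n\} \subseteq A_0$ satisfying $d(u_{n+1}, Tu_n) = d(A,B)$ for all $n$. If at some stage $u_{n+1} = u_n$, then $d(u_n, Tu_n) = d(A,B)$ and $u_n$ is already a best proximity point, so I assume $u_{n+1} \neq u_n$ throughout. The key algebraic step is to apply the generalized $\theta$-$\phi$-proximal contraction of the first kind to the two relations $d(u_{n+1}, Tu_n) = d(A,B)$ and $d(u_{n}, Tu_{n-1}) = d(A,B)$ (reading $u_1 = u_{n+1}, v_1 = u_n, u_2 = u_n, v_2 = u_{n-1}$ in the definition), yielding
\begin{equation*}
\theta\bigl(d(u_{n+1}, u_n)\bigr) \leq \phi\Bigl[\theta\bigl[\, a\, d(u_n, u_{n-1}) + b\, d(u_{n+1}, u_n) + c\, d(u_n, u_{n-1}) + h\bigl(d(u_n, u_n) + d(u_{n-1}, u_{n+1})\bigr)\bigr]\Bigr].
\end{equation*}
Using the triangle inequality to bound the bracketed expression by a combination of $d(u_{n+1}, u_n)$ and $d(u_n, u_{n-1})$, together with the coefficient constraint on $a,b,c,h$, I would establish the contraction estimate $\theta\bigl(d(u_{n+1}, u_n)\bigr) \leq \phi\bigl[\theta\bigl(d(u_n, u_{n-1})\bigr)\bigr]$.

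Iterating this bound gives $\theta\bigl(d(u_{n+1}, u_n)\bigr) \leq \phi^{n}\bigl[\theta\bigl(d(u_1, u_0)\bigr)\bigr]$. By property $(\phi_2)$ the right side tends to $1$ as $n \to \infty$, and then $(\theta_2)$ forces $d(u_{n+1}, u_n) \to 0$. To upgrade this to the Cauchy property, I would argue by contradiction in the standard way: assuming $\{u_n\}$ is not Cauchy produces subsequences $\{u_{m_k}\}, \{u_{n_k}\}$ with $d(u_{m_k}, u_{n_k})$ bounded below by some $\varepsilon > 0$; applying the contraction inequality along these indices, passing to the limit, and invoking the continuity hypotheses $(\theta_3)$ and $(\phi_3)$ together with Lemma \ref{LZ} (which gives $\phi(t) < t$ for $t > 1$) yields the contradiction. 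Hence $\{u_n\}$ is Cauchy, and by completeness of $X$ and closedness of $A$ it converges to some $u \in A$.

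It remains to show $d(u, Tu) = d(A,B)$ and uniqueness. Since $d(u_{n+1}, Tu_n) = d(A,B)$ and $u_{n+1} \to u$, I would use the approximate compactness of $B$ with respect to $A$: from $d(u, Tu_n) \to d(u, B)$ one extracts a convergent subsequence of $\{Tu_n\}$, whose limit lies in $B$ and realizes the distance $d(A,B)$ from $u$, placing $u \in A_0$. Then $Tu \in B_0$, so there is $z \in A_0$ with $d(z, Tu) = d(A,B)$; combining this with $d(u_{n+1}, Tu_n) = d(A,B)$ through the weak $P$-property gives $d(z, u_{n+1}) \leq d(Tu, Tu_n)$, and a limiting argument identifies $z = u$, whence $d(u, Tu) = d(A,B)$. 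For uniqueness, if $u$ and $u^{*}$ are both best proximity points with $u \neq u^{*}$, applying the first-kind contraction to the pair of relations $d(u, Tu) = d(u^{*}, Tu^{*}) = d(A,B)$ gives $\theta(d(u, u^{*})) \leq \phi[\theta(\,\cdots\,)]$; after bounding the argument and invoking $\phi(t) < t$ from Lemma \ref{LZ}, I reach $\theta(d(u,u^{*})) < \theta(d(u,u^{*}))$, a contradiction. The main obstacle I anticipate is the bookkeeping in the contraction step: correctly matching the four variables $u_1, u_2, v_1, v_2$ to the iterates and then controlling the mixed distance terms $d(v_1, u_2) + d(v_2, u_1)$ via the triangle inequality so that the coefficient condition genuinely collapses the estimate into a single-variable self-map of $\theta$-values, which is essential for the $\phi^n$-iteration to apply.
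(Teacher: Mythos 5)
Your outline matches the paper's proof in almost every component: the construction of the iterates, the application of the contraction to consecutive proximity relations (your variable assignment swaps the roles of the two pairs relative to the paper, so $b$ and $c$ trade places, but this is harmless), the estimate $\theta(d(u_{n+1},u_n))\leq\phi[\theta(d(u_n,u_{n-1}))]$, the $\phi^{n}$-iteration with $(\phi_2)$ and $(\theta_2)$, the Cauchy argument by contradiction, the use of approximate compactness to place the limit $u$ in $A_0$, and the uniqueness argument. However, there is a genuine gap at the decisive step where you show $d(u,Tu)=d(A,B)$. After producing $z\in A_0$ with $d(z,Tu)=d(A,B)$, you invoke the weak $P$-property to get $d(z,u_{n+1})\leq d(Tu,Tu_n)$ and then claim that ``a limiting argument identifies $z=u$.'' This cannot be completed: you would need $d(Tu,Tu_n)\to 0$, but $T$ is not assumed continuous in this theorem (continuity appears only in the second-kind theorem), and a first-kind proximal contraction gives no control over distances between images in $B$ (that is precisely what the second-kind condition is for). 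Approximate compactness only yields a subsequence $Tu_{n_k}\to v$ with $d(u,v)=d(A,B)$; nothing identifies $v$ with $Tu$.

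The missing idea, which is how the paper closes this step, is to apply the generalized $\theta$-$\phi$-proximal contraction of the first kind \emph{once more}, now to the two relations $d(z,Tu)=d(A,B)$ and $d(u_{n+1},Tu_n)=d(A,B)$ (taking $u_1=z$, $v_1=u$, $u_2=u_{n+1}$, $v_2=u_n$ in the definition), which yields
\begin{equation*}
\theta\bigl(d(z,u_{n+1})\bigr)\leq\phi\Bigl[\theta\bigl[a\,d(u,u_n)+b\,d(z,u)+c\,d(u_n,u_{n+1})+h\bigl(d(u,u_{n+1})+d(u_n,z)\bigr)\bigr]\Bigr].
\end{equation*}
Letting $n\to\infty$ and using the continuity of $\theta$ and $\phi$, the right-hand side tends to $\phi[\theta((b+h)\,d(u,z))]$. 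If $d(u,z)>0$, then monotonicity of $\theta$ and $\phi$, the coefficient condition (which forces $b+h\leq 1$), and Lemma \ref{LZ} give $\theta(d(z,u))\leq\phi[\theta(d(z,u))]<\theta(d(z,u))$, a contradiction; hence $z=u$ and $d(u,Tu)=d(z,Tu)=d(A,B)$. (One must also dispose separately of the trivial case $d(z,u_{n_0+1})=0$ for some $n_0$, as the paper does, since the contraction hypothesis requires the relevant points to be distinct.) With this replacement your argument becomes essentially the paper's proof; as written, the theorem's main conclusion is not reached.
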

\begin{proof}
Choose an element $ u_{0}\in A_{0}$. As,  $ T\left( A_0\right) \in B_0$,
therefore there is  an element $  u_{1} \in A_{0}$ satisfying 
\begin{equation*}
 d(u_1,T u_0)  = d(A,B).
\end{equation*}
Since $ T(A_0) \in B_0 $, there exists $ u_2\in A_0 $ such that 
$$ d(u_2,Tu_1) =d(A,B).$$
Again, since $ T(A_0) \in B_0 $, there exists $ u_3\in A_0 $ such that  $$ d(u_3,Tu_2) =d(A,B).$$
Continuing this process, by induction, we construct a sequence $ x_n\in A_0 $ such that
\begin{equation*}\label{q}
d\left(u_{n+1},Tu_{n}\right)= d(A,B),  \forall n\in \mathbb{N}. 
\end{equation*}        
Since $ (A,B)$ satisfies the  $ P $ property, we conclude that
\begin{equation}\label{cc}
 d(u_n,u_{n+1})= d(Tu_{n},Tu_{n+1}), \forall n\in \mathbb{N}.
\end{equation}
If $ u_{n_{0}}= u_{n_{0}+1}  $ for some $n_{0}\in\mathbb{N}  $, from (\ref{q}) one obtains
\begin{equation}
 d\left(u_{n_{0}},Tu_{n_{0}}\right)=d\left(u_{n_{0}+1},Tu_{n_{0}}\right)= d(A,B)
\end{equation}
that is, $u_{n_{0}}\in BPP  $. Thus, we suppose that
$ d(u_n,x_{n+1}) >0 $ for all $ n\in\mathbb{N}. $\\
We shall prove that the sequence $ u_n $ is a Cauchy sequence.
Let us first prove that  
\begin{equation*}
\lim_{n\rightarrow \infty }d\left( u_{n},u_{n+1}\right)  =0.
\end{equation*}
As $ T  $ is generalized $(\theta,\phi) $-proximal contraction of the first kind, we have that
\begin{align*}
\theta \left(d\left(u_{n} ,u_{n+1}\right)\right)&\leq \phi\left[ \theta\left[  a d\left(u_{n-1} ,u_{n} \right)+bd\left(u_{n-1} ,u_{n} \right)+cd\left(u_{n} ,u_{n+1} \right)+h\left( d\left(u_{n-1} ,u_{n+1} \right)+d\left(u_{n} ,u_{n} \right)\right)\right]  \right]\\
&=\phi\left[ \theta\left[  a d\left(u_{n-1} ,u_{n} \right)+bd\left(u_{n-1} ,u_{n} \right)+cd\left(u_{n} ,u_{n+1} \right)+h\left( d\left(u_{n-1} ,u_{n+1} \right)\right)\right]  \right]\\
& \leq \phi\left[ \theta\left[  a d\left(u_{n-1} ,u_{n} \right)+bd\left(x_{n-1} ,x_{n} \right)+cd\left(u_{n} ,u_{n+1} \right)+h\left( d\left(u_{n-1} ,u_{n} \right)+d\left(u_{n} ,u_{n+1} \right)\right)\right]  \right]\\
&= \phi\left[ \theta\left[  (a+b+h) d\left(u_{n-1} ,u_{n} \right)+(c+h)d\left(u_{n} ,u_{n+1} \right)\right]  \right]
\end{align*}
Since $  \theta$ is strictly increasing and by Lemma $ \ref{LZ} $, we deduce
\begin{equation*}
d\left(x_{n} ,x_{n+1}\right)<(a+b+h) d\left(x_{n-1} ,x_{n} \right)+(c+h)d\left(x_{n} ,x_{n+1} \right).
\end{equation*}
Thus
\begin{equation*}
d\left(u_{n} ,u_{n+1}\right)<\frac{a+b+h}{1-c-h}( d\left(u_{n-1} ,x_{n} \right)).
\end{equation*}
If $ b+b+c+2h=1 $, we have $ 0< 1-c-h$ and so 
 \begin{equation*}
d\left(u_{n} ,u_{n+1}\right)\leq \frac{a+b+h}{1-c-h}( d\left(u_{n-1} ,u_{n} \right))=d\left(u_{n-1} ,u_{n} \right), \forall n \in\mathbb{N};
\end{equation*}
 Consequently, 
  \begin{equation*}
\theta\left( d\left(u_{n} ,u_{n+1}\right)\right) \leq \phi\left[ \theta \left( d\left(u_{n-1} ,u_{n} \right)\right)\right] 
\end{equation*}
If $ b+b+c+2h<1 $, we have $ 0< 1-c-h$ and so 
 \begin{equation*}
d\left(u_{n} ,u_{n+1}\right)<d\left(u_{n-1} ,u_{n} \right), \forall n \in\mathbb{N};
\end{equation*}
 Consequently, 
  \begin{equation*}
\theta\left( d\left(u_{n} ,u_{n+1}\right)\right) \leq \phi\left[ \theta \left( d\left(u_{n-1} ,u_{n} \right)\right)\right] 
\end{equation*}
It implies
\begin{align*}
\theta \left(d\left(u_{n} ,u_{n+1}\right)\right) 
&\leq \phi\left[ \theta \left(d(x_{n-1},u_{n} \right)\right] \\
&\leq \phi^{2}\left[ \theta\left(d(u_{n-2},u_{n-1} \right)\right] \\
&\leq...\leq \phi ^{n}\left[ \theta \left(d(u_{0},u_{1} \right)\right] .
\end{align*}
Taking the limit as $ n\rightarrow \infty $, we have
\begin{equation*}
1\leq \theta(d\left( u_{n},u_{n+1}\right))\leq \lim_{n\rightarrow \infty }\phi ^{n}\left[ \theta(d\left( u_{0},u_{1}\right))\right]  =1.
\end{equation*}
Since $ \theta\in\Theta $, we obtain
\begin{equation}
\lim_{n\rightarrow \infty }d\left( u_{n},u_{n+1}\right) =0.
\end{equation}
Next, we shall prove that $\left\lbrace  u_{n}\right\rbrace  _{n\in \mathbb{N}}$ is a Cauchy sequence, i.e, $\lim_{n\rightarrow \infty }d\left( u_{n,}u_{m}\right) =0,$ for all $n\in \mathbb{N}$.
Suppose to the contrary that exists $\varepsilon $ $>0$ and sequences $
n_{\left( k\right) }$ and $m_{\left( k\right)}$ of natural numbers such that
\begin{equation}\label{5A}
 m_{\left( k\right) }> n_{\left( k\right) }>k,\ \  d\left( x_{m_{\left( k\right) }},x_{n_{\left( k\right) }}\right) \geq
\varepsilon , \ \ D\left( u_{m_{\left( k\right) -1}},u_{n_{\left( k\right) }}\right) <\varepsilon .
\end{equation} 
Using the triangular inequality, we find that, 
\begin{align}\label{6A}
\varepsilon  \leq d\left( u_{m_{\left( k\right) }},u_{n_{\left( k\right)}}\right) &\leq d\left( u_{m_{\left( k\right) }},u_{n\left( k\right)-1}\right) +d\left( x_{n\left( k\right)-1},x_{n_{\left( k\right)}}\right)\\
& <\varepsilon+d\left( u_{n\left( k\right)-1},u_{n_{\left( k\right)}}\right).
\end{align}
Then, by  $\ref{5A}$ and $\ref{6A}$, it follows that  
\begin{equation}\label{7}
\lim_{k\rightarrow \infty }d\left( u_{m_{\left( k\right) }},u_{n_{\left( k\right)}}\right) =\varepsilon .
\end{equation}
Using the triangular inequality, we find that, 
\begin{align}\label{66A}
\varepsilon  \leq d\left( u_{m_{\left( k\right) }},u_{n_{\left( k\right)}}\right) &\leq d\left( u_{m_{\left( k\right) }},u_{n\left( k\right)+1}\right) +d\left( x_{n\left( k\right)+1},u_{n_{\left( k\right)}}\right)
\end{align}
and
\begin{align}\label{666A}
\varepsilon  \leq d\left( u_{m_{\left( k\right) }},u_{n_{\left( k\right)+1}}\right) &\leq d\left( u_{m_{\left( k\right) }},u_{n\left( k\right)}\right) +d\left( u_{n\left( k\right)},u_{n_{\left( k\right)+1}}\right)
\end{align}
Then, by  $(\ref{66A})$ and $(\ref{666A})$, it follows that  
\begin{equation}\label{77}
\lim_{k\rightarrow \infty }d\left(u_{ m_{\left( k\right)} },u_{n_{\left( k\right)+1}}\right) =\varepsilon .
\end{equation}
Similarly method, we conclude that
\begin{equation}\label{777}
\lim_{k\rightarrow \infty }d\left( u_{m_{\left( k\right) +1}},u_{n_{\left( k\right)}}\right) =\varepsilon .
\end{equation}
Using again the triangular inequality, 
\begin{equation}\label{88}
d\left( u_{m_{\left( k\right) +1}},u_{n_{\left( k\right) +1}}\right) \leq
d\left( x_{m_{\left( k\right) +1}},x_{m_{\left( k\right)} }\right) +d\left(
u_{m\left( k\right) },u_{n_{\left( k\right) }}\right) +d\left( u_{n_{\left(k\right)}},u_{n_{\left( k\right) +1}}\right).
\end{equation}
On the other hand, using triangular inequality, we have 
\begin{equation}\label{99}
d\left( u_{m_{\left( k\right) }},u_{n_{\left( k\right) }}\right) \leq d\left( u_{m_{\left( k\right) }},u_{m_{\left( k\right) +1}}\right) +d\left(u_{m_{\left( k\right) +1}},u_{n_{\left( k\right) +1}}\right) +d\left(u_{n_{\left( k\right) +1}},u_{n_{\left( k\right) }}\right). 
\end{equation}
Letting $k\rightarrow \infty $ in inequality $(\ref{88} )$ and $(\ref{99}) $, we obtain 
\begin{equation}\label{10}
\lim_{k\rightarrow \infty }d\left( u_{m_{\left( k\right) +1}},u_{n_{\left(k\right) +1}}\right) =\varepsilon .
\end{equation}
Substituting $ u_1= x_{m_{\left( k\right)+1}},u_2= x_{n_{\left( k\right)+1}},v_1= u_{m_{\left( k\right)}} $ and $ v_1= u_{n_{\left( k\right)}} $ in  assumption of the theorem, we get
\begin{equation}\label{xx}
	\theta \left( {d\left( u_{m_{\left( k\right) +1}},u_{n_{\left( k\right)+1}}\right) }\right)\leq \phi\left\lbrace \theta\left\lbrace
	\begin{aligned}		    
	&ad\left( u_{m_{\left( k\right)}},u_{n_{\left( k\right)}}\right)\\
	&+bd\left( u_{m_{\left( k\right)}1},u_{n_{\left( k\right)}}\right)\\
	&+cd\left( u_{n_{\left( k\right)}+1},u_{n_{\left( k\right)}}\right)\\
	&+h(d\left( u_{m_{\left( k\right)}},u_{n_{\left( k\right)}+1}\right)+d\left( u_{n_{\left( k\right)}},u_{m_{\left( k\right)}+1}\right))
	\end{aligned}
\right\rbrace \right\rbrace  		
	\end{equation} 
Letting Letting $k\rightarrow \infty $ in (\ref{xx}), and using  $\left( \theta _{1}\right)$, $\left( \theta _{3}\right),$  $\left( \phi _{3}\right)$ and Lemma (\ref{LZ}) we obtain
\begin{equation*}
\theta \left(\varepsilon\right) \leq \phi\left[  \theta\left( a\varepsilon +b\varepsilon+c\varepsilon+2h\varepsilon\right)\right]. 
\end{equation*} 
We derive
\begin{equation*}
\varepsilon <\varepsilon.
\end{equation*}
Which is a contradiction. Thus $ \lim_{n,m\rightarrow \infty }d\left( u_{n},u_{m}\right)= 0 $, which shows that $\lbrace x_{n} \rbrace   $ is a Cauchy sequence. Then there exists $ z\in A $ such that
\begin{equation*}
\lim_{n\rightarrow \infty }d\left( u_{n},u\right)=0 .
\end{equation*}
Also,
\begin{align*}
d\left( u,B\right)&\leq d\left( u,Tu_{n}\right)\\
&\leq d\left( u,x_{n+1}\right)+d\left( u_{n+1},Tu_{n}\right)\\
&=d\left( u,u_{n+1}\right)+d\left( A,B\right)\\
&\leq d\left( u,u_{n+1}\right)+d\left( u,B\right).
\end{align*}
Therefore, $d\left( u,Tu_{n}\right)\rightarrow d\left( u,B\right). $ In spite of the fact that $ B $ is approximately compact with respect to $ A $  , the sequence $ \lbrace Tu_{n}\rbrace $ has a subsequence $ \lbrace Tu_{n_{k}}\rbrace $ converging to some element $ v \in B. $ So it turns out that
 \begin{equation}\label{dd}
d(u,v)= \lim_{n\rightarrow \infty }d\left( u_{n_{k}+1},Tu_{n_{k}}\right)=d(A,B).
\end{equation}
Thus $ u $ must be an element of $ A_{0} $.
Again, since $ T(A_0) \in B_0 $, there exists $ t\in A_0 $ such that  
\begin{equation}
 d(t,Tu) =d(A,B)
\end{equation}
for some element $  t$ in $ A $. Using the weak $ p $-property and (\ref{dd}) we have
$$ d(u_{n_{k}+1}, t) = d(Pu_{n_{k}}, Pu), \forall n_k \in\mathbb{N}.  $$
If for some $ n_0 $, $  d(t,  u_{n_{0}+1}) = 0$, consequently $  d(P u_{n_{0}}, Tu) = 0 $. So $  P u_{n_{0}} =Tu$, hence $  d(A,B) = d(u, Tu)$. Thus the conclusion is immediate. So let for
any $  n \geq 0$, $  d(t,  u_{n+1}) > 0$. Since $ T $ is a generalized $(\theta,\phi)  $-proximal contraction of the first kind, it follows from this that
\begin{equation}\label{gg}
\theta(d(t,u_{n+1}))
	\leq \phi\left[ \theta\left[  a d\left(u,u_{n}\right)+bd\left(t,u\right)+cd\left(u_{n},u_{n+1}\right)+h\left( d\left(u,u_{n+1}\right)+d\left(u_{n},t\right)\right)\right]  \right] 
\end{equation}
Since $ \theta $ and $ \phi $ are two continuous functions, by letting $  n\rightarrow \infty$ in inequality $  (\ref{gg})$, we obtain
\begin{align*}
\theta(d(t,u))&\leq \phi\left[ \theta\left[  (b+h)\left( d\left(u,t\right)\right)\right]  \right]\\
&\leq \phi\left[ \theta\left[  \left( d\left(u,t\right)\right)\right]  \right]\\
&<\theta(d(t,u)).  
\end{align*}
It is a contradiction. Therefore, $ u=t $, that
$$ d(u, Tu) = d(t, Tu) = d(A,B). $$
 Uniqueness: Suppose that there is another best proximity point $ z $ of the mapping $ T $ such that
$$ d(z, Tz)=d(A,B). $$
Since $  T$ is a generalized $(\theta,\phi)  $-proximal contraction of the first kind, it follows from this that
\begin{align*}
\theta(d(z,u))&\leq \phi\left[ \theta\left[  a d\left(z,u\right)+bd\left(z,z\right)+cd\left(u,u\right)+h\left( d\left(z,u\right)+d\left(z,u\right)\right)\right]  \right]\\
&=\phi\left[ \theta\left[  (a+2h) d\left(z,u\right)\right]  \right],   
\end{align*}
which is a contradiction. Thus, $ z $ and $ u $ must be identical. Hence, $  T$ has a unique best proximity point.
\end{proof} 
Next, we state and prove the best proximity point theorem for non-self
generalized $ (\theta,\phi) $-proximal contraction of the second kind.
\begin{theorem}\label{T2}
Let $ (X,d) $ be a complete metric space and $ (A,B) $ be a pair of non-void closed subsets of $ (X,d) $. If $ A $ is approximately compact with respect to $ B $ and $ T: A\rightarrow B $ satisfy the following conditions :
\begin{itemize}
\item[(i)] $ T\left( A_0\right) \in B_0$ and the pair $\left( A,B\right)$ satisfies the weak $ P $-property;
\item[(ii)] $ T $ is continuous generalized $ (\theta,\phi)$-proximal contraction of second kind.
\end{itemize}
Then there exists a unique $ u\in A $ such that $ d(u, Tu)=d(A,B) $
and $  u_n \rightarrow u$, where $  u_0$ is any fixed point in $ A_0 $ and $   d( u_{n+1 },T u_{n })= d(A,B)$ for $  n \geq 0$. Further, if $ z $ is another best proximity point of $ T $, then $  Tu = Tz$.
\end{theorem}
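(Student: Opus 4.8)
The plan is to follow the scheme of Theorem \ref{T1}, but to run the whole Cauchy argument on the image sequence $\{Tu_n\}$ rather than on $\{u_n\}$, since a second-kind contraction only controls distances between images. First I would fix $u_0\in A_0$ and, using $T(A_0)\subseteq B_0$, construct inductively a sequence $\{u_n\}\subseteq A_0$ with $d(u_{n+1},Tu_n)=d(A,B)$ for all $n$. Substituting $u_1=u_{n+1}$, $u_2=u_n$, $v_1=u_n$, $v_2=u_{n-1}$ into the second-kind inequality, noting $d(Tu_n,Tu_n)=0$, and bounding $d(Tu_{n-1},Tu_{n+1})$ by the triangle inequality, I expect to reach
\begin{equation*}
d(Tu_{n+1},Tu_n)\leq \frac{a+c+h}{1-b-h}\,d(Tu_n,Tu_{n-1}),
\end{equation*}
so that the coefficient condition $a+b+c+2h\leq 1$ (with $b+h<1$) forces $d(Tu_{n+1},Tu_n)\leq d(Tu_n,Tu_{n-1})$ and hence $\theta(d(Tu_{n+1},Tu_n))\leq \phi[\theta(d(Tu_n,Tu_{n-1}))]$. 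Iterating gives $\theta(d(Tu_{n+1},Tu_n))\leq \phi^{n}[\theta(d(Tu_1,Tu_0))]$, and letting $n\to\infty$ with $(\phi_2)$ and $(\theta_2)$ yields $d(Tu_{n+1},Tu_n)\to 0$.

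Next I would show $\{Tu_n\}$ is Cauchy by the contradiction device of Theorem \ref{T1}: assume $\varepsilon>0$ and indices $m(k)>n(k)>k$ with $d(Tu_{m(k)},Tu_{n(k)})\geq\varepsilon$ and $d(Tu_{m(k)-1},Tu_{n(k)})<\varepsilon$, deduce via the triangle inequality that the relevant image distances all converge to $\varepsilon$, then substitute the four elements $u_{m(k)+1},u_{n(k)+1},u_{m(k)},u_{n(k)}$ into the second-kind inequality. The consecutive terms carrying coefficients $b$ and $c$ tend to $0$, so passing to the limit with continuity of $\theta$ and $\phi$ and Lemma \ref{LZ} produces $\theta(\varepsilon)\leq\phi[\theta((a+2h)\varepsilon)]\leq\phi[\theta(\varepsilon)]<\theta(\varepsilon)$, a contradiction. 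Thus $\{Tu_n\}$ converges to some $w\in B$, since $B$ is closed in the complete space $X$.

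To recover a best proximity point I would transfer this convergence back to $A$. From $d(A,B)\leq d(w,A)\leq d(w,u_{n+1})\leq d(w,Tu_n)+d(Tu_n,u_{n+1})=d(w,Tu_n)+d(A,B)$ together with $d(w,Tu_n)\to 0$, I obtain $d(w,u_{n+1})\to d(w,A)=d(A,B)$; approximate compactness of $A$ with respect to $B$ (applied with $y=w$) then extracts a subsequence $u_{n_k+1}\to u\in A$. Passing to the limit in $d(u_{n_k+1},Tu_{n_k})=d(A,B)$ gives $d(u,w)=d(A,B)$, while continuity of $T$ forces $Tu_{n_k+1}\to Tu$; since $Tu_n\to w$ this gives $Tu=w$ and therefore $d(u,Tu)=d(A,B)$. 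For the full convergence $u_n\to u$ I would invoke the weak $P$-property on the pairs $d(u_{n+1},Tu_n)=d(A,B)$ and $d(u,Tu)=d(A,B)$, yielding $d(u_{n+1},u)\leq d(Tu_n,Tu)=d(Tu_n,w)\to 0$.

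Finally, for $Tu=Tz$ when $z$ is another best proximity point, I would put $u_1=v_1=u$ and $u_2=v_2=z$ in the second-kind inequality; the terms with coefficients $b$ and $c$ vanish, and using $(\theta_1)$ with $a+2h\leq 1$ this collapses to $\theta(d(Tu,Tz))\leq\phi[\theta((a+2h)d(Tu,Tz))]\leq\phi[\theta(d(Tu,Tz))]<\theta(d(Tu,Tz))$ by Lemma \ref{LZ}, forcing $d(Tu,Tz)=0$. The main obstacle I anticipate is exactly this transfer between the two sets: the second-kind hypothesis gives no direct bound on $d(u_n,u_m)$, so the argument hinges on combining the weak $P$-property (to move Cauchyness and convergence from $B$ back to $A$) with continuity of $T$ (to identify $Tu=w$). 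I would also flag that the degenerate substitutions formally require $Tu_1\neq Tv_1$, which is why the genuine conclusion here is $Tu=Tz$ rather than $u=z$.
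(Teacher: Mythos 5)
Your proposal is correct and follows essentially the same route as the paper's proof: iterate the second-kind contraction on the image sequence $\{Tu_n\}$, show it is Cauchy by the $\varepsilon$-contradiction device, use approximate compactness of $A$ with respect to $B$ to extract a convergent subsequence $u_{n_k+1}\to u$ in $A$, identify $Tu$ with the limit of $\{Tu_n\}$ via continuity of $T$, and obtain $Tu=Tz$ by the degenerate substitution with Lemma \ref{LZ}. In fact your write-up is slightly more complete than the paper's: the paper never verifies the claimed convergence of the full sequence $u_n\to u$ (it only produces a subsequence), whereas your final appeal to the weak $P$-property, giving $d(u_{n+1},u)\le d(Tu_n,Tu)\to 0$, supplies exactly that missing step.
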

\begin{proof}
Similar to Theorem \ref{T1}, we can find a sequence $ \lbrace u_n\rbrace $ in $ A_0 $ such that
\begin{equation}\label{SS}
d( u_{n+1 },T u_{n })= d(A,B).
\end{equation} 
for all non-negative integral values of $ n $. From the $  p$-property and (\ref{SS}) we get $$  d(u_{n }, u_{n+1 }) =  d(Tu_{n-1 }, Tu_{n }), \forall n \in\mathbb{N}.$$
If for some $ n_{0} $, $ d(u_{n _{0+1 }}, u_{n _{0+2 }})= 0 $, consequently $ d(Tu_{n _{0 }}, Tu_{n _{0 }+1})= 0 $. So
$ Tu_{n _{0 }}= Tu_{n _{0 }+1} $, hence $d(A,B)=d(Tu_{n _{0 }}, T_{n _{0 }+1}).  $ Thus the conclusion is immediate. So let for any $  n \geq 0$,
$  d(Tu_{n }, Tu_{n +1}) > 0$.
We shall prove that the sequence $ u_n $ is a Cauchy sequence.
Let us first prove that  
\begin{equation*}
\lim_{n\rightarrow \infty }d\left( u_{n},u_{n+1}\right)  =0.
\end{equation*} 

As $ T  $ is generalized $(\theta,\phi) $-proximal contraction of the second kind, we have that
\begin{align*}
\theta \left(d\left(Tu_{n} ,Tu_{n+1}\right)\right)&\leq \phi\left[ \theta\left[  a d\left(Tu_{n-1} ,Tu_{n} \right)+bd\left(Tu_{n-1} ,Tu_{n} \right)+cd\left(Tu_{n} ,Tu_{n+1} \right)+h\left( d\left(Tu_{n-1} ,Tu_{n+1} \right)+d\left(Tu_{n} ,Tu_{n} \right)\right)\right]  \right]\\
&=\phi\left[ \theta\left[  a d\left(Tu_{n-1} ,Tu_{n} \right)+bd\left(Tu_{n-1} ,Tu_{n} \right)+cd\left(Tu_{n} ,Tu_{n+1} \right)+h\left( d\left(Tu_{n-1} ,Tu_{n+1} \right)\right)\right]  \right]\\
& \leq \phi\left[ \theta\left[  a d\left(Tu_{n-1} ,Tu_{n} \right)+bd\left(Tu_{n-1} ,Tu_{n} \right)+cd\left(Tu_{n} ,Tu_{n+1} \right)+h\left( d\left(Tu_{n-1} ,Tu_{n} \right)+d\left(Tu_{n} ,Tu_{n+1} \right)\right)\right]  \right]\\
&= \phi\left[ \theta\left[  (a+b+h) d\left(Tu_{n-1} ,Tu_{n} \right)+(c+h)d\left(Tu_{n} ,Tu_{n+1} \right)\right]  \right]
\end{align*}
Since $  \theta$ is strictly increasing and by Lemma $ \ref{LZ} $, we deduce
\begin{equation*}
d\left(Tu_{n} ,Tu_{n+1}\right)<(a+b+h) d\left(Tu_{n-1} ,Tu_{n} \right)+(c+h)d\left(Tu_{n} ,Tu_{n+1} \right).
\end{equation*}
Thus
\begin{equation*}
d\left(Tu_{n} ,Tu_{n+1}\right)<\frac{a+b+h}{1-c-h}( d\left(Tu_{n-1} ,Tu_{n} \right)).
\end{equation*}
If $ b+b+c+2h=1 $, we have $ 0< 1-c-h$ and so 
 \begin{equation*}
d\left(Tu_{n} ,Tu_{n+1}\right)\leq \frac{a+b+h}{1-c-h}( d\left(Tu_{n-1} ,Tu_{n} \right))=d\left(Tu_{n-1} ,Tu_{n} \right), \forall n \in\mathbb{N};
\end{equation*}
 Consequently, 
  \begin{equation*}
\theta\left( d\left(Tu_{n} ,Tu_{n+1}\right)\right) \leq \phi\left[ \theta \left( d\left(Tu_{n-1} ,Tu_{n} \right)\right)\right] 
\end{equation*}
If $ b+b+c+2h<1 $, we have $ 0< 1-c-h$ and so 
 \begin{equation*}
d\left(Tu_{n} ,Tu_{n+1}\right)<d\left(Tu_{n-1} ,Tu_{n} \right), \forall n \in\mathbb{N};
\end{equation*}
 Consequently, 
  \begin{equation*}
\theta\left( d\left(Tu_{n} ,Tu_{n+1}\right)\right) \leq \phi\left[ \theta \left( d\left(Tu_{n-1} ,Tu_{n} \right)\right)\right] 
\end{equation*}
It implies
\begin{align*}
\theta \left(d\left(Tu_{n} ,Tu_{n+1}\right)\right) 
&\leq \phi\left[ \theta \left(d(Tu_{n-1},Tu_{n} \right)\right] \\
&\leq \phi^{2}\left[ \theta\left(d(Tu_{n-2},Tu_{n-1} \right)\right] \\
&\leq...\leq \phi ^{n}\left[ \theta \left(d(Tu_{0},Tu_{1} \right)\right] .
\end{align*}
Taking the limit as $ n\rightarrow \infty $, we have
\begin{equation*}
1\leq \theta(d\left( Tu_{n},Tu_{n+1}\right))\leq \lim_{n\rightarrow \infty }\phi ^{n}\left[ \theta(d\left( Tu_{0},Tu_{1}\right))\right]  =1.
\end{equation*}
Since $ \theta\in\Theta $, we obtain
\begin{equation}
\lim_{n\rightarrow \infty }d\left( Tu_{n},Tu_{n+1}\right) =0.
\end{equation}
Next, we shall prove that $\left\lbrace  Tu_{n}\right\rbrace  _{n\in \mathbb{N}}$ is a Cauchy sequence, i.e, $\lim_{n\rightarrow \infty }d\left( Tu_{n,}Tu_{m}\right) =0,$ for all $n\in \mathbb{N}$.
Suppose to the contrary that exists $\varepsilon $ $>0$ and sequences $
Tn_{\left( k\right) }$ and $Tm_{\left( k\right)}$ of natural numbers such that
\begin{equation}\label{5AA}
 Tm_{\left( k\right) }> Tn_{\left( k\right) }>k,\ \  d\left( Tu_{m_{\left( k\right) }},Tu_{n_{\left( k\right) }}\right) \geq
\varepsilon , \ \ d\left( Tu_{m_{\left( k\right) -1}},Tu_{n_{\left( k\right) }}\right) <\varepsilon .
\end{equation} 
Using the triangular inequality, we find that, 
\begin{align}\label{6A}
\varepsilon  \leq d\left( Tu_{m_{\left( k\right) }},Tu_{n_{\left( k\right)}}\right) &\leq d\left( Tu_{m_{\left( k\right) }},Tx_{n\left( k\right)-1}\right) +d\left( Tu_{n\left( k\right)-1},Tu_{n_{\left( k\right)}}\right)\\
& <\varepsilon+d\left( Tu_{n\left( k\right)-1},Tu_{n_{\left( k\right)}}\right).
\end{align}
Then, by  $\ref{5A}$ and $\ref{6A}$, it follows that  
\begin{equation}\label{7}
\lim_{k\rightarrow \infty }d\left( Tu_{m_{\left( k\right) }},Tu_{n_{\left( k\right)}}\right) =\varepsilon .
\end{equation}
Using the triangular inequality, we find that, 
\begin{align}\label{66A}
\varepsilon  \leq d\left( Tu_{m_{\left( k\right) }},Tu_{n_{\left( k\right)}}\right) &\leq d\left( Tu_{m_{\left( k\right) }},Tu_{n\left( k\right)+1}\right) +d\left(T u_{n\left( k\right)+1},Tu_{n_{\left( k\right)}}\right)
\end{align}
and
\begin{align}\label{666AA}
\varepsilon  \leq d\left( Tu_{m_{\left( k\right) }},Tu_{n_{\left( k\right)+1}}\right) &\leq d\left( Tu_{m_{\left( k\right) }},Tu_{n\left( k\right)}\right) +d\left(T u_{n\left( k\right)},Tu_{n_{\left( k\right)+1}}\right)
\end{align}
Then, by  $(\ref{66A})$ and $(\ref{666A})$, it follows that  
\begin{equation}\label{77}
\lim_{k\rightarrow \infty }d\left(Tu_{ m_{\left( k\right) }},Tu_{n_{\left( k\right)+1}}\right) =\varepsilon .
\end{equation}
Similarly method, we conclude that
\begin{equation}\label{7777}
\lim_{k\rightarrow \infty }d\left( Tu_{m_{\left( k\right) +1}},Tu_{n_{\left( k\right)}}\right) =\varepsilon .
\end{equation}
Using again the triangular inequality, 
\begin{equation}\label{888}
d\left( Tu_{m_{\left( k\right) +1}},Tu_{n_{\left( k\right) +1}}\right) \leq
d\left( u_{m_{\left( k\right) +1}},Tu_{m_{\left( k\right)} }\right) +d\left(
Tu_{m\left( k\right) },Tu_{n_{\left( k\right) }}\right) +d\left( Tu_{n_{\left(k\right)}},Tu_{n_{\left( k\right) +1}}\right).
\end{equation}
On the other hand, using triangular inequality, we have 
\begin{equation}\label{999}
d\left( Tu_{m_{\left( k\right) }},Tu_{n_{\left( k\right) }}\right) \leq d\left( Tu_{m_{\left( k\right) }},Tu_{m_{\left( k\right) +1}}\right) +d\left(Tu_{m_{\left( k\right) +1}},Tu_{n_{\left( k\right) +1}}\right) +d\left(Tu_{n_{\left( k\right) +1}},Tu_{n_{\left( k\right) }}\right). 
\end{equation}
Letting $k\rightarrow \infty $ in inequality $(\ref{888} )$ and $(\ref{999}) $, we obtain 
\begin{equation}\label{100}
\lim_{k\rightarrow \infty }d\left( Tu_{m_{\left( k\right) +1}},Tu_{n_{\left(k\right) +1}}\right) =\varepsilon .
\end{equation}
Substituting $ u_1= Tu_{m_{\left( k\right)+1}},u_2= Tu_{n_{\left( k\right)+1}},v_1= Tu_{m_{\left( k\right)}} $ and $ v_1= Tu_{n_{\left( k\right)}} $ in  assumption of the theorem, we get
\begin{equation}\label{xxX}
	\theta \left( {d\left( Tu_{m_{\left( k\right) +1}},Tu_{n_{\left( k\right)+1}}\right) }\right)\leq \phi\left\lbrace \theta\left\lbrace
	\begin{aligned}		    
	&ad\left( Tu_{m_{\left( k\right)}},Tu_{n_{\left( k\right)}}\right)\\
	&+bd\left( Tu_{m_{\left( k\right)}1},Tu_{n_{\left( k\right)}}\right)\\
	&+cd\left(T u_{n_{\left( k\right)}+1},Tu_{n_{\left( k\right)}}\right)\\
	&+h(d\left( Tu_{m_{\left( k\right)}},Tu_{n_{\left( k\right)}+1}\right)+d\left( Tu_{n_{\left( k\right)}},Tu_{m_{\left( k\right)}+1}\right))
	\end{aligned}
\right\rbrace \right\rbrace  		
	\end{equation} 
Letting Letting $k\rightarrow \infty $ in (\ref{xxX}), and using  $\left( \theta _{1}\right)$, $\left( \theta _{3}\right),$  $\left( \phi _{3}\right)$ and Lemma (\ref{LZ}) we obtain
\begin{equation*}
\theta \left(\varepsilon\right) \leq \phi\left[  \theta\left( a\varepsilon +b\varepsilon+c\varepsilon+2h\varepsilon\right)\right]. 
\end{equation*} 
We derive
\begin{equation*}
\varepsilon <\varepsilon.
\end{equation*}
Which is a contradiction. Thus $ \lim_{n,m\rightarrow \infty }d\left( Tu_{n},Tu_{m}\right)= 0 $, which shows that $\lbrace Tu_{n} \rbrace   $ is a Cauchy sequence. Then there exists $ v\in B $ such that
\begin{equation*}
\lim_{n\rightarrow \infty }d\left( Tu_{n},v\right)=0 .
\end{equation*}
Also,
\begin{align*}
d\left( v,A\right)&\leq d\left( v,Tu_{n}\right)\\
&\leq d\left( v,u_{n+1}\right)+d\left( u_{n+1},Tu_{n}\right)\\
&=d\left( v,u_{n+1}\right)+d\left( A,B\right)\\
&\leq d\left( v,u_{n+1}\right)+d\left( v,A\right).
\end{align*}
Therefore, $d\left( v,Tu_{n}\right)\rightarrow d\left( v,A\right). $ Since $ A $ is approximately compact with respect to $ B $  , the sequence $ \lbrace u_{n}\rbrace $ has a subsequence $ \lbrace u_{n_{k}}\rbrace $ converging to some element $ u \in A. $ So it turns out that
 \begin{equation}\label{dd}
d(u,v)= \lim_{n\rightarrow \infty }d\left( u_{n_{k}+1},Tu_{n_{k}}\right)=d(A,B).
\end{equation}
Because $ T $ is a continuous mapping,
$$  d(u, Tu) = \lim_{n\rightarrow \infty } d(u_{n+1}, Tu_{n}) = d(A,B).$$
 Uniqueness: Suppose that there is another best proximity point $ z $ of the mapping $ T $ such that
$$ d(z, Tz)=d(A,B). $$
Since $  T$ is a generalized $(\theta,\phi)  $-proximal contraction of the first second, it follows from this that
\begin{align*}
\theta(d(Tz,Tu))&\leq \phi\left[ \theta\left[  a d\left(Tz,Tu\right)+bd\left(Tz,Tz\right)+cd\left(Tu,Tu\right)+h\left( d\left(Tz,Tu\right)+d\left(Tz,Tu\right)\right)\right]  \right]\\
&=\phi\left[ \theta\left[  (a+2h) d\left(Tz,Tu\right)\right]  \right],   
\end{align*}
which is a contradiction. Thus, $ z $ and $ u $ must be identical. Hence, $  T$ has a unique best proximity point.

\begin{theorem}\label{T3}
Let $ (X,d) $ be a complete metric space and $ (A,B) $ be a pair of non-void closed subsets of $ (X,d) $. Let $ T: A\rightarrow B $ satisfy the following conditions :
\begin{itemize}
\item[(i)] $ T\left( A_0\right) \in B_0$ and the pair $\left( A,B\right)$ satisfies the weak $ P $-property;
\item[(ii)]  $ T $ is a generalized $ (\theta,\phi)$-proximal contraction of the first kind as well as a generalized $ (\theta,\phi)$-proximal contraction of the second kind.
\end{itemize}
Then there exists a unique $ u\in A $ such that $ d(u, Tu)=d(A,B) $
and $  u_n \rightarrow u$, where $  u_0$ is any fixed point in $ A_0 $ and $   d( u_{n+1 },T u_{n })= d(A,B)$ for $  n \geq 0$.
\end{theorem}
\begin{proof}
Similar to Theorem \ref{T1}, we find a sequence $  \lbrace u_{n } \rbrace $ in $ A_0 $ such that
$$ d( u_{n+1 }, T u_{n }) = d(A,B) $$
for all non-negative integral values of $ n $. Similar to Theorem \ref{T1}, we can show
that sequence $  \lbrace u_{n } \rbrace $ is a Cauchy sequence. Thus converges to some element $ u $ in $  A$. As in Theorem \ref{T2}, it can be shown that the sequence $  \lbrace Tu_{n } \rbrace $ is a Cauchy sequence and converges to some element $ v $ in $  B$. Therefore,
\begin{equation}\label{BB}
d(u, v) = \lim_{n\rightarrow \infty }d( u_{n+1 }, T u_{n }) = d(A,B).
\end{equation}
Eventually, u becomes an element of $  A_0$. In light of the fact that $  T(A_0)\in B_0$,
$$  d(t, Tu) = d(A,B)$$
for some element $t  $ in $  A$. From the $  p$-property framework and (\ref{BB},) we get
$$ d( u_{n+1 }, t) = d(T u_{n }, Tu), \forall n \in \mathbb{N}. $$
If for some $ n_0 $, $  d(t,  u_{n_{ 0}+1}) = 0$, consequently $  d(Tu_{n_{ 0}}, Tu) = 0$ . So $  Tu_{n_{ 0}} =Tu$, hence $  d(A,B) = d(u, Tu)$. Thus the conclusion is immediate. So let for any $  n \geq 0$, $  d(t, u_{n+1 }) > 0$. Since $  T$ is a generalized $  (\theta,\phi)$-proximal contraction
of the first kind, it can be seen that
\begin{align}\label{gggg}
\theta(d(t, u_{n+1}))&\leq \phi\left[ \theta \left(  
ad(u,  u_{n }) + bd(t, u) + cd( u_{n },  u_{n+1 })
+ h[d(u,  u_{n+1 }) + d( u_{n }, t)\right)\right]  .
\end{align}
Since $ \theta $ and $ \phi $ are two continuous functions, by letting $  n\rightarrow \infty$ in inequality $  (\ref{gggg})$, we obtain, 
$  d(u, Tu) = d(t, Tu) = d(A,B).$
Also, as in the theorem \ref{T1}, the uniqueness of the best proximity point of mapping $ T $ follows.
\end{proof}
\end{proof}
\begin{example}
Let $ X=\lbrace \lambda_{n }: n\in\mathbb{N} \rbrace $  with the
metric $ d(x, y) = \vert x-y \vert $  for all $  x, y \in X $, where the sequence $ G_{n }  $, defined by
\begin{align*}
&\lambda_{1 } = 1\\
&\lambda_{2 } = 1 + 2\\
&\lambda_{3 } = 1 + 2 + 3\\
&...\\
&\lambda_{n } = 1 + 2 + 3 + . . . + n.
\end{align*}
We know, $  (X, d)$ is a complete metric space.
Let $  A = G_{3n } : n\in\mathbb{N}$ and $  B = G_{3n-1 } : n\in\mathbb{N}$. It is easy to see that $  d(A,B) = 3$,
$ A_0 = A $ and $ B_0 =B $. Define a mappings $  T : A \rightarrow B$, by $T(\lambda_{3n }) = \lambda_{3n-1 }$ for all $n \geq 1$.
It is clear that $  A$ is approximately compact with respect to $ B $, $  (A,B)$ satisfies the $ p $-property, $  T$ is continuous and $T(A_0) \subseteq B_0$ . We
will show that $ T $ is an $ (\theta,\phi) $-proximal contraction with $ \theta\in\Theta $ and $ \phi\in\Phi $ that is $ \theta(t)=e^{t} $ and $ \phi(t)=t^{\frac{1}{2}} $. Observe that,
With out of generality, we may assume that $  n < m$, and since
\begin{align*}
&\lambda_{3n-1 } = 1 + 2 + 3 + . . . + 3n - 1,\\
&\lambda_{3m-1 } =  1 + 2 + 3 + . . . + 3m - 1,\\
&\lambda_{3n } = 1 + 2 + 3 + . . . + 3n - 1+3n,\\
&\lambda_{3m } =  1 + 2 + 3 + . . . + 3m - 1+3m.
\end{align*}
It follow that,
\begin{align*}
d(T(\lambda_{3n }), T(\lambda_{3m}))& = \vert \lambda_{3n-1 }- \lambda_{3m-1}\vert\\
&= 3n + (3n + 1) + . . . + (3m - 1),
\end{align*}
\begin{align*}
d(\lambda_{3n }, \lambda_{2m})& = \vert \lambda_{2n }- \lambda_{2m}\vert\\
&= 3n + (3n + 1) + . . . + (3m ),
\end{align*}
and
\begin{align*}
d(T(\lambda_{2n }), T(\lambda_{3m}))-d(\lambda_{3n }, \lambda_{3m})& = \vert \lambda_{3n-1 }- \lambda_{3m-1}\vert -\vert \lambda_{3n }- \lambda_{3m}\vert\\
&= 3n -3m.
\end{align*}
So that,
\begin{align*}
e^{d(T(\lambda_{3n }), T(\lambda_{3m}))-d(\lambda_{3n }, \lambda_{3m}) })&=
\frac{e^{ d(T(\lambda_{3n }), T(\lambda_{3m}))   }}{e^{ d(\lambda_{3n }, \lambda_{3m})   }}\\
&=e^{ 3n -3m})\\
&=e^{ -3(m -n)})\\
&\leq e^{ -3}= \frac{1}{e^{ 3}}.
\end{align*}
So that,
\begin{align*}
e^{ d(T(\lambda_{3n }), T(\lambda_{3m})) }+1&=\theta(d(T(\lambda_{3n }), T(\lambda_{3m})))\\
& \leq e^{ d(\lambda_{3n }, \lambda_{3m})   }\frac{1}{e^{ 3}}+1\\
&\leq \frac{e^{ d(\lambda_{3n }, \lambda_{3m})   }+2}{2}\\
&=\phi\left[ \theta(d(\lambda_{3n }, \lambda_{3m}))\right] .
\end{align*}
Consequently, $ T $ is an generalized $(\theta,\phi)  $-proximal contraction of the second kind with $ a=1 $, $  b = c = h = 0$. Thus, all the conditions of
Theorem \ref{T2} are satisfied. Hence, $ T $ has a unique best proximity point  and there exist $  \lambda_ { 3}\in A$ such that
$$  d(\lambda_ { 3}, T\lambda_ { 3}) = d(\lambda_ { 3}, \lambda_ { 2}) = 3 = d(A,B)$$
\end{example}
\bibliographystyle{amsplain}

\begin{thebibliography}{99}
\bibitem{HAY} H. Aydi, H. Lakzian, Z. Mitrović, S. Radenović,
Best Proximity Points of MT-Cyclic Contractions with Property UC, Numerical Functional Analysis and Optimization; Volume 41, 2020.
	  
\bibitem {S} S. Banach, Sur les opérations dans les ensembles abstraits et leur application aux équations intégrales. Fundam. Math. 3(1), 133–181 (1922).

 \bibitem {BS} S. S. Basha, P. Veeramani, Best proximity pair theorems for multifunctions with open fibres. J. Approx. Theory 103, 119–129 (2000).
    
\bibitem {BG} I. Beg, G. Mani, A. J. Gnanaprakasam, Best proximity point of generalized F-proximal non-self contractions
 Journal of Fixed Point Theory and Applications 23 (4), 1-11

\bibitem{KI} A. Eldred, W. Kirk, P. Veeramani, Proximal normal structure and relatively nonexpansive mappings, Stud. Math. 171(3), 283-293 (2005).

\bibitem{JK} M. Jleli, B. Samet, A new generalization of the Banach contraction principle, J. Inequal. Appl., 2014 (2014), 8 pages. 
	
\bibitem{KARI} A. Kari, M. Rossafi, E. Marhrani, M. Aamri, Fixed point theorem for Nonlinear $F-$contraction via $w-$distance,  Adv. Math. Phys, 2020(2020), Article ID 6617517.

\bibitem{KARO} A. Kari, M. Rossafi, E. Marhrani, M. Aamri, $\theta-\phi-$contraction on $\left(\alpha,\eta \right)-$complete rectangular $b-$metric spaces, Int. J. Math. Mathematical Sciences, (2020), Article ID 5689458.

\bibitem{HA} V. Parvaneh, M. Reza Haddadi and H. Aydi, On Best Proximity Point Results for Some Type of Mappings. Hindawi Journal of Function Spaces Volume 2020, Article ID 6298138, 6 pages.
	
\bibitem{RJ} V. S. Raj, A best proximity point theorem for weakly contractive non-self-mappings. Nonlinear Anal. 2011, 74, 4804–4808.
	
\bibitem{RK} M.  Rossafi, A. Kari, Some fixed point theorems for $ F $-expansive mapping in generalized metric spaces, Open J. Math. Anal. 2021, 5(2), 17-30.

\bibitem{RK} M. Rossafi, A. Kari, Best Proximity Point Theorems for $ \alpha $-Proximal $ \theta,\phi $-non-self mappings, Asian Journal of Mathematics and Applications, 2021.
	
\bibitem{ZHN} J. Zhang, Y. Su, Q. Cheng, A note on ‘A best proximity point theorem for Geraghty-contractions’. Fixed Point
Theory Appl. 2013, 99.

\bibitem{ZH} D. Zheng , Z. Cai , P. Wang,  New fixed point theorems for ($\theta-\phi  $)-contraction in complete metric. spaces. Journal of Non linear Sciences and Applications.  2017;10(5):2662, and Information Processing 6 (2008), 433-446.
\end{thebibliography}

\end{document}